\def \R {\mathbb R}
\def \bj {\bar{j}}
\def \bl {\bar{l}}
\def \bm {\bar{m}}
\def \bn {\bar{n}}
\def \bk {\bar{k}}
\def \bi {\bar{i}}
\def \bq {\bar{q}}
\def \Ric {\text{Ric}}
\def \Rm {\text{Rm}}
\def \Re {\text{Re}}
\def \db {\bar{\partial}}
\def \d {\partial}
\def \Tr {\text{tr}}
\def \lapl {\Delta}
\newcommand{\dd}[1] {\frac{\partial}{\partial #1 }}
\newtheorem {Theorem} {Theorem}[section]
\newtheorem {Proposition}[Theorem]      {Proposition}
\newtheorem {Lemma}      [Theorem]       {Lemma}
\newtheorem {Corollary} {Corollary}[Theorem]
\theoremstyle{definition}
\theoremstyle{remark}
\title{The K\"ahler-Ricci flow on manifolds with negative holomorphic curvature}
\author{Freid Tong}
\date{}
\newcommand{\Addresses}{{% additional braces for segregating \footnotesize
  \bigskip
  \footnotesize
  \textsc{Department of Mathematics, Columbia University,
    2990 Broadway, New York, NY 10027}\par\nopagebreak
  \textit{E-mail address}: \texttt{tong@math.columbia.edu}
}}
\begin{document}
\maketitle
\begin{abstract}
We study the behaviour of the normalized K\"ahler-Ricci flow on complete K\"ahler manifolds of negative holomorphic sectional curvature. We show that the flow exists for all time and converges to a K\"ahler-Einstein metric of negative scalar curvature, recovering a result of Wu and Yau in \cite{WY3}. 
\end{abstract}
\section{Introduction}
The K\"ahler-Ricci flow has become one of the most important tools in understanding the existence of K\"ahler-Einstein metrics, see for example \cite{Cao, Chau} for well-known results in this direction. In 2015, Wu and Yau in \cite{WY1, WY2} proved the existence of K\"ahler-Einstein metrics with negative scalar curvature on compact K\"ahler manifold $X$ when $X$ admits a K\"ahler metric with negative holomorphic sectional curvature, their result sheds light on the deep relationship between holomorphic sectional curvature and Ricci curvature. Recently in \cite{WY3}, they extended their result to the case where $X$ is complete noncompact while assuming the sectional curvature of $X$ is bounded. The approach of using the K\"ahler-Ricci flow to study the same problem has been explored by Nomura in \cite{Nomura}, where he reproves the results in \cite{WY1, WY2} using the K\"ahler-Ricci flow. However, his method do not extend to the noncompact setting as he uses a result of Demailly-Paun on the numerical characterization of the K\"ahler cone \cite{DP}, which has no analogue in the noncompact setting. 

In this note, we will take the K\"ahler-Ricci flow approach to recover the results of \cite{WY3} by showing the convergence of the normalized K\"ahler-Ricci flow to a complete K\"ahler-Einstein metric. One advantage of this approach is that it relies only on elementary maximum principle and we are able avoid some of the more sophisticated arguments used in \cite{WY3}. For instance, we do not make use of the quasi-bounded coordinate charts constructed in \cite{WY3}, and we can also avoid the maximum principle argument used in \cite{WY3} to show the negative holomophic sectional curvature condition is preserved for short time along the Ricci flow. This is also interesting from the point of view of parabolic PDEs because the equation obtained \eqref{eq:CMA}, is a complex monge ampere equation with a background metric which is time dependent, hence the nonlinear parabolic theory on manifolds developed in \cite{PT} does not apply. 

\begin{Theorem}\label{main}
Let $(M, \omega_0)$, is a complete K\"ahler manifold with bounded curvature, that is
\[\sup_M|\Rm (\omega_0)|\leq B\]
for some constant $B>0$, and suppose that the holomorphic sectional curvature of $\omega_0$ is bounded above by a negative constant,
\[H_{\omega_0}(\eta) = R_{\omega_0}(\eta, \bar{\eta}, \eta, \bar{\eta})\leq -\kappa|\eta|^4\]
for some $\kappa>0$. 
Then the normalized K\"ahler-Ricci flow starting at $\omega_0$ exists for all time and converges smoothly to a complete K\"ahler-Einstein metric $\omega_{KE}$ of negative scalar curvature on $M$ which satisfies
\begin{enumerate}
\item $\Ric(\omega_{KE}) = -\omega_{KE}$
\item $C^{-1}\omega_0\leq \omega_{KE}\leq C\omega_0$
\item $|\nabla_{\omega_{KE}}^l\Rm(\omega_{KE})|_{\omega_{KE}}\leq C_l$
\end{enumerate}
for constants $C$ and $C_l$, for $l = 0, 1,\ldots$ depending only on $B$ and $\kappa$. 
\end{Theorem}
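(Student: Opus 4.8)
The plan is to run the normalized K\"ahler--Ricci flow $\partial_t\omega_t = -\Ric(\omega_t)-\omega_t$ with $\omega_{t=0}=\omega_0$, reduce it to a scalar parabolic complex Monge--Amp\`ere equation of the form \eqref{eq:CMA} for a potential $\varphi_t$ taken relative to the explicit reference path $\hat\omega_t = e^{-t}\omega_0 - (1-e^{-t})\Ric(\omega_0)$ (the solution of the linear ODE $\partial_t\hat\omega_t=-\Ric(\omega_0)-\hat\omega_t$), and then to establish a priori estimates, uniform in $t$, strong enough to yield long-time existence and smooth convergence. Short-time existence of the flow with bounded curvature is furnished by Shi's theorem using $\sup_M|\Rm(\omega_0)|\le B$, together with the accompanying derivative estimates; since $M$ is noncompact, all maximum-principle arguments below will be justified through completeness and the bounded geometry of $\omega_0$ (an Omori--Yau type argument). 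Note that $\hat\omega_t$ is only a closed reference $(1,1)$-form and need not be positive once $t$ is large, since $-\Ric(\omega_0)$ is not assumed positive; only the bound $\hat\omega_t\le C\omega_0$, from the curvature bound on $\omega_0$, will be used.

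The first estimate is a uniform $C^0$ bound for $\varphi_t$. At a spatial maximum of $\varphi(\cdot,t)$ one has $i\d\db\varphi\le 0$, hence $\omega_t\le\hat\omega_t\le C\omega_0$ there, which bounds the volume ratio $\omega_t^n/\omega_0^n$ from above at that point; substituting into \eqref{eq:CMA} gives a differential inequality $\partial_t\varphi_{\max}\le C-\varphi_{\max}$, and the spatial minimum is treated symmetrically, so $\sup_M|\varphi_t|\le C$ for all $t$.

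The core of the argument is the metric equivalence $(2)$. For the lower bound $\omega_t\ge C^{-1}\omega_0$ I would compute the evolution of $\log\Tr_{\omega_t}\omega_0$ along the flow: the parabolic Schwarz lemma / Aubin--Yau computation produces a term built from the curvature of the \emph{fixed} metric $\omega_0$, and here the hypothesis $H_{\omega_0}(\eta)\le-\kappa|\eta|^4$ enters through a Royden-type algebraic inequality that turns this term into a genuinely negative contribution, giving $(\partial_t-\Delta_{\omega_t})\log\Tr_{\omega_t}\omega_0\le C-\kappa'\,\Tr_{\omega_t}\omega_0$; with a suitable auxiliary function this forces the spatial supremum of $\Tr_{\omega_t}\omega_0$ to be non-increasing whenever it exceeds a threshold, so $\Tr_{\omega_t}\omega_0\le C$. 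The upper bound $\omega_t\le C\omega_0$ is then obtained by combining this lower bound with the Monge--Amp\`ere structure: from the pointwise inequality $\Tr_{\omega_0}\omega_t\le\frac{\omega_t^n}{\omega_0^n}\,(\Tr_{\omega_t}\omega_0)^{n-1}$ it suffices to bound the volume ratio from above, which follows from \eqref{eq:CMA} once $\varphi_t$ and $\partial_t\varphi_t$ are controlled; differentiating \eqref{eq:CMA} in $t$ and using the lower metric bound gives $(\partial_t-\Delta_{\omega_t})\partial_t\varphi_t = -\partial_t\varphi_t + O(e^{-t})$, whence the maximum principle yields that $\partial_t\varphi_t\to 0$ exponentially, in particular $|\partial_t\varphi_t|\le C$. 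I expect this interlocking of the Royden-type lower bound with the volume-ratio estimate to be the main obstacle: one cannot simply run a Schwarz estimate directly for $\Tr_{\omega_0}\omega_t$, because that would require negative curvature of the \emph{evolving} metric $\omega_t$, which is not available a priori. Avoiding this is exactly how one sidesteps the step in \cite{WY3} showing that negative holomorphic sectional curvature is preserved along the flow.

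With $(2)$ and the decay $|\partial_t\varphi_t|\le Ce^{-t}$ established, the rest is standard. The uniform metric equivalence together with the bounded geometry of $\omega_0$ give, via local parabolic Evans--Krylov and Schauder estimates (or Calabi-type $C^3$ estimates) followed by bootstrapping, the bounds $|\nabla^l\Rm(\omega_t)|\le C_l$ uniform in $t$; in particular the flow develops no singularity and exists for all time. Finally, the exponential decay of $\partial_t\varphi_t$ shows that $\varphi_t\to\varphi_\infty$ in $C^0$, hence (with the uniform higher-order bounds) in $C^\infty_{\mathrm{loc}}$, and that $\omega_t\to\omega_{KE}:=-\Ric(\omega_0)+i\d\db\varphi_\infty$, a complete metric uniformly equivalent to $\omega_0$; passing to the limit in \eqref{eq:CMA} gives $\log(\omega_{KE}^n/\omega_0^n)=\varphi_\infty$, and applying $-i\d\db\log\det$ yields $\Ric(\omega_{KE})=-\Ric(\omega_0)+\Ric(\omega_0)-(\omega_{KE}+\Ric(\omega_0))=-\omega_{KE}$, which is $(1)$. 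Properties $(2)$ and $(3)$ for $\omega_{KE}$ pass to the limit from the uniform estimates, with constants depending only on $B$ and $\kappa$.
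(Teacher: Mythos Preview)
Your outline matches the paper's proof almost step for step: Shi's short-time existence, the parabolic Schwarz--Royden computation to bound $\Tr_{\omega_t}\omega_0$, the evolution of $\dot\varphi$ to get exponential decay, the volume-ratio argument for the upper metric bound, and the passage to the limit. The one genuine issue is your first $C^0$ estimate for $\varphi$. The upper bound works as you say, but the claim that ``the spatial minimum is treated symmetrically'' fails: at a spatial minimum one has $\omega_t\ge\hat\omega_t$, and since $\hat\omega_t=e^{-t}\omega_0-(1-e^{-t})\Ric(\omega_0)$ need not be positive (as you yourself note), this gives no lower bound on $\omega_t^n/\omega_0^n$ and hence no inequality $\partial_t\varphi_{\min}\ge -C-\varphi_{\min}$ for $t$ beyond a fixed threshold depending on $B$.

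The fix is simply to drop this step and reorder: run the Schwarz--Royden argument first (it needs only the curvature bound and Omori--Yau, not any control on $\varphi$), then use $\Tr_{\omega_t}\omega_0\le C$ to bound $\Tr_{\omega_t}\Ric(\omega_0)$ and obtain $(\partial_t-\Delta_{\omega_t})(e^t\dot\varphi)=O(1)$, whence $|\dot\varphi|\le Cte^{-t}$, and \emph{integrate} this to get $|\varphi|\le C$. This is exactly what the paper does, and with that reordering your proof is complete and essentially identical to it.
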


\section{Lower bound on the evolving metric}
The short time existence of the Ricci flow when $(M, \omega_0)$ is complete and has bounded curvature was established by Shi in \cite{Shi}. In this case, he is able to obtain a Ricci flow solution for short time which also has bounded curvature and satisfied his derivative estimates. The uniqueness of this solution is proved by Chen and Zhu in \cite{CZ}. 

From now on, let $(M, \omega(t))$ denote the solution of the normalized K\"ahler-Ricci flow with initial metric $\omega_0$ obtained from Shi's estimates. That is, $\omega(t)$ satisfies
\begin{align}
\dd{t}\omega(t) &= -\Ric(\omega(t))-\omega(t)\\
\omega(0)& = \omega_0
\end{align}
and the metrics $\omega(t)$ are all complete with bounded curvature, are uniformly equivalent to the initial metric $\omega_0$ and satisfies Shi's derivative estimates.

Before we proceed to the main content of this section, we need a version of the maximum principle on non-compact manifolds along the Ricci flow. This can be found for instance in \cite{Shi2}, but we will prove a version that we need here. 

\begin{Proposition}\label{max}
Suppose $(M, g(t))_{t\in[0, T]}$ is a complete solution of the K\"ahler-Ricci flow with bounded curvature. Then for any $C^2$ function $f$ which is bounded above on $M\times [0, T]$, either
\begin{enumerate}
\item $\sup_{M\times [0, T]}f(x, t) = \sup_{M}f(x, 0)$
\item There exist a sequence of points $(x_k, t_k)$, such that 
\[\lim_{k\to \infty}f(x_k, t_k)  = \sup_{M\times [0, T]} f \qquad\qquad \lim_{k\to \infty} |\nabla f|(x_k, t_k) = 0\]
\[(\dd{t}-\lapl)f(x_k, t_k) \geq -\epsilon_k\]
for some sequence $\epsilon_k\to 0$. 
\end{enumerate}
\end{Proposition}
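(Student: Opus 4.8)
The plan is to run an Omori--Yau-type argument adapted to the flow. Set $S=\sup_{M\times[0,T]}f$ and assume conclusion (1) fails, i.e. $\sup_M f(\cdot,0)<S$; I will then produce the sequence in (2).

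\emph{Step 1: a good exhaustion function.} Fix $p\in M$. Using that the metrics $g(t)$ have bounded curvature on $[0,T]$, satisfy Shi's derivative estimates, and are uniformly equivalent (as recalled from \cite{Shi}), I would construct a smooth proper function $\psi\colon M\to[1,\infty)$ with
\[
|\nabla_{g(t)}\psi|\le C_1,\qquad |\Delta_{g(t)}\psi|\le C_2\qquad\text{on }M\times[0,T],
\]
for constants $C_1,C_2$ independent of $t$. This is done by smoothing the distance function $d_{g(0)}(p,\cdot)$: the Laplacian comparison theorem (valid from the Ricci bound) controls the Laplacian away from the cut locus, Hessian comparison together with Shi's bounds lets one smooth across it, and the uniform equivalence plus the $C^1$ control of the metrics along the flow lets one pass freely between $g(0)$ and $g(t)$. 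This is a classical construction.

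\emph{Step 2: perturb and maximize.} For $\varepsilon>0$ consider $F_\varepsilon=f-\varepsilon\psi$ on $M\times[0,T]$. Since $f$ is bounded above and $\psi$ is proper, the sublevel sets of $-F_\varepsilon$ near its supremum are compact in space, so $F_\varepsilon$ attains a maximum at some $(x_\varepsilon,t_\varepsilon)$. Choosing, for each $\delta>0$, a point where $f>S-\delta$ and letting $\varepsilon\to0$ first and then $\delta\to0$ shows $\max F_\varepsilon\to S$; combined with $\max F_\varepsilon\le f(x_\varepsilon,t_\varepsilon)\le S$ this gives $f(x_\varepsilon,t_\varepsilon)\to S$.

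\emph{Step 3: locate the maximum in time, then read off the estimates.} If $t_\varepsilon=0$ along some sequence $\varepsilon\to0$, then $\max F_\varepsilon=f(x_\varepsilon,0)-\varepsilon\psi(x_\varepsilon)\le\sup_M f(\cdot,0)$, forcing $\sup_M f(\cdot,0)=S$ and contradicting the failure of (1); hence $t_\varepsilon>0$ for all small $\varepsilon$. At such a maximum point, spatial maximality gives $\nabla f=\varepsilon\nabla\psi$ and $\Delta f\le\varepsilon\Delta\psi$, while $t_\varepsilon>0$ gives $\partial_t f\ge0$ (one-sided from the left when $t_\varepsilon=T$). Therefore $|\nabla f|(x_\varepsilon,t_\varepsilon)\le C_1\varepsilon$ and $(\partial_t-\Delta)f(x_\varepsilon,t_\varepsilon)\ge-C_2\varepsilon$. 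Taking $\varepsilon=1/k$, $\epsilon_k=C_2/k$, and $(x_k,t_k)=(x_{1/k},t_{1/k})$ yields (2). The only genuinely technical ingredient is Step 1, the uniform-in-time cutoff; the rest is the soft part of the maximum principle.
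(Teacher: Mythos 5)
Your proposal follows essentially the same route as the paper's proof: an Omori--Yau-type perturbation $f-\varepsilon\psi$ by a proper exhaustion function whose gradient and Laplacian are bounded uniformly in $t$, maximization of the perturbed function, and the soft case analysis of Steps 2--3 (in the paper $\psi=\rho$ from Schoen--Yau and $\varepsilon=1/k$), so in outline it is correct. The one place where the paper does genuine work, and which your Step 1 only asserts, is the uniform-in-time bound $|\Delta_{g(t)}\psi|\le C_2$: uniform equivalence of the metrics $g(t)$ transfers the gradient bound but \emph{not} the Laplacian bound, since the Laplacian sees the connections and not just the metric as a bilinear form, so the phrase ``pass freely between $g(0)$ and $g(t)$'' hides the real content. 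The paper avoids any smoothing of the distance function by taking $\rho$ with $|\nabla\rho|_{g(0)}\le C$ and $|\nabla^2\rho|_{g(0)}\le C$ directly from Schoen--Yau, and then controls $\Delta_{g(t)}\rho$ by estimating the difference of Christoffel symbols along the flow, $|\Gamma_{g(t)}-\Gamma_{g(0)}|\le C\int_0^t|\nabla\Ric(g(s))|\,ds\le C\int_0^t s^{-1/2}\,ds\le C$, which is exactly where Shi's derivative estimates enter. This is precisely the ``$C^1$ control of the metrics along the flow'' you invoke without justification; you should either prove it by this integration argument or cite it explicitly, since it is the only nontrivial ingredient of the proposition. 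With that supplied, your Steps 2 and 3 (the squeeze showing $f(x_\varepsilon,t_\varepsilon)\to S$, the exclusion of $t_\varepsilon=0$ when alternative (1) fails, and the first/second-order conditions including the one-sided inequality at $t_\varepsilon=T$) are correct and identical in substance to the paper's argument.
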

\begin{proof}
	From \cite[p. 26]{SY} we know that there exist a proper smooth function $\rho:M\to \R$ and a constant $C$ such that $|\nabla \rho|_{\omega_0}\leq C$, $\rho(x) \geq Cr_{\omega_0}(x)$ and $|\nabla^2\rho|_{\omega_0}\leq C$ where $r_{\omega_0}(x)$ is the distance function from some point. 
	
	Then, if $\sup_{M\times [0, T]}f(x, t) = \sup_{M}f(x, 0)$, we are done, so suppose  $\sup_{M\times [0, T]}f(x, t) > \sup_{M}f(x, 0)$. Pick points $(x_{k}, t_{k})$ to achieve the maximum of the function $f(x, t) - \frac{1}{k} \rho(x)$, which has a maximum since $f$ is bounded above and $\rho(x) \geq Cr(x)$ goes to infinity at infinity. Then it is easy to see that $t_k\neq 0$ for $k$ large, so we have
\[\lim_{k\to \infty}f(x_k, t_k)  = \sup_{M\times [0, T]} f \qquad\qquad |\nabla f|(x_k, t_k) = \frac{1}{k}|\nabla \rho|_{g(t_k)}(x_k)\]
\[(\dd{t}-\lapl)f(x_k, t_k) \geq \frac{1}{k}(\dd{t}-\lapl)\rho(x_k) = -\frac{1}{k}\lapl_{g(t_k)} \rho(x_k)\]
since the metrics are uniformly equivalent along the Ricci flow, then $|\nabla \rho|_{g_0}\leq C$ implies $|\nabla \rho|_{g(t)}\leq C$. We only need to show that $|\lapl_{g(t)} \rho |\leq C$ 
\begin{align*}
|\nabla_{g(t)}^2\rho-\nabla_{g(0)}^2\rho| &= |(\Gamma_{g(t)}-\Gamma_{g(0)})_{ij}^k\d_k \rho)|  \leq C|\Gamma_{g(t)}-\Gamma_{g(0)}| \leq C\int_{0}^t |\dot{\Gamma}|(s)\,ds \\
&\leq C\int_{0}^t |\nabla\Ric(g(s))|\, ds \leq C\int_{0}^t\frac{1}{s^{1/2}}\,ds\leq C
\end{align*}
and since $|\nabla_{g(0)}^2\rho|\leq C$, we conclude that $|\nabla_{g(t)}^2\rho|\leq C$ is always bounded, which means $|\lapl_{g(t)}\rho|\leq C$. 
\end{proof}

Now we prove a uniform lower bounds on the evolving metrics $\omega(t)$. This is an adaptation of the Schwarz lemma calculation of Wu and Yau in \cite{WY1}. The original calculation is based on Yau's generalized Schwarz lemma \cite{YauS} and Royden's improvement \cite{Royden} to the setting of holomorphic sectional curvature. 
\begin{Proposition}
Along the flow, we have
\begin{equation}
\omega_0\leq \max(n, \frac{2n}{(n+1)\kappa})\omega(t)
\end{equation}
for all time $t$, as long as the flow exists. 
\end{Proposition}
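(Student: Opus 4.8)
The plan is to bound above the quantity $S := \Tr_{\omega(t)}\omega_0 = g(t)^{i\bj}(g_0)_{i\bj}$. Since $\omega_0 \le (\Tr_{\omega(t)}\omega_0)\,\omega(t)$ pointwise for positive definite Hermitian forms, it suffices to show $S \le \max\!\big(n,\tfrac{2n}{(n+1)\kappa}\big)$ on $M\times[0,T]$; note that $S(\cdot,0)\equiv n$. The first step is to record the evolution of $\log S$: from $\dd{t}g_{i\bj}=-R_{i\bj}-g_{i\bj}$ one gets $\dd{t}g^{i\bj}=g^{i\bq}g^{p\bj}R_{p\bq}+g^{i\bj}$, and therefore
\[
\dd{t}\log S = 1 + \frac{1}{S}\,g^{i\bq}g^{p\bj}R_{p\bq}(\omega(t))(g_0)_{i\bj}.
\]

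The second step is the second order (Aubin--Yau / Chern--Lu) estimate for the identity map $(M,\omega(t))\to(M,\omega_0)$. Working at a point in coordinates with $g(t)_{i\bj}=\delta_{ij}$ and $(g_0)_{i\bj}=\lambda_i\delta_{ij}$, one absorbs the third order terms using the weighted Cauchy--Schwarz inequality $\big|\sum_i\d_k(g_0)_{i\bi}\big|^2\le S\sum_i\lambda_i^{-1}|\d_k(g_0)_{i\bi}|^2$, which after passing to $\log S$ cancels the term $-|\nabla S|^2_{\omega(t)}/S^2$, leaving
\[
\lapl_{\omega(t)}\log S \ge \frac{1}{S}\,g^{i\bq}g^{p\bj}R_{p\bq}(\omega(t))(g_0)_{i\bj} - \frac{1}{S}\sum_{i,k}R_{i\bi k\bk}(\omega_0),
\]
the last sum taken over the frame diagonalizing both metrics. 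Subtracting, the $\Ric(\omega(t))$ contributions cancel; and since that frame is $\omega(t)$-orthonormal with $|e_i|^2_{\omega_0}=\lambda_i$, Royden's polarization lemma applied to the hypothesis $H_{\omega_0}\le-\kappa|\cdot|^4$ gives $\sum_{i,k}R_{i\bi k\bk}(\omega_0)\le-\tfrac{\kappa}{2}\big((\sum_i\lambda_i)^2+\sum_i\lambda_i^2\big)\le-\tfrac{(n+1)\kappa}{2n}S^2$, using $\sum_i\lambda_i^2\ge\tfrac1n(\sum_i\lambda_i)^2$. Altogether
\[
\Big(\dd{t}-\lapl_{\omega(t)}\Big)\log S \le 1 - \frac{(n+1)\kappa}{2n}\,S.
\]

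The last step is the maximum principle. Since the metrics $\omega(t)$ are uniformly equivalent to $\omega_0$, the function $f=\log S$ is bounded above on $M\times[0,T]$, so Proposition \ref{max} applies. In alternative (1) we get $\sup_{M\times[0,T]}S=\sup_M S(\cdot,0)=n$. In alternative (2), along the sequence $(x_k,t_k)$ we have $-\eps_k\le(\dd{t}-\lapl_{\omega(t)})\log S(x_k,t_k)\le 1-\tfrac{(n+1)\kappa}{2n}S(x_k,t_k)$ with $S(x_k,t_k)\to\sup_{M\times[0,T]}S$, hence $\sup S\le\tfrac{2n}{(n+1)\kappa}$. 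In either case $S\le\max\!\big(n,\tfrac{2n}{(n+1)\kappa}\big)$, which is the assertion.

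The main obstacle is the second order computation: one must carry out the Chern--Lu estimate carefully enough to verify both that the $\Ric(\omega(t))$ contributions of $\dd{t}\log S$ and of $\lapl_{\omega(t)}\log S$ cancel exactly, and that the genuinely first order terms cancel after taking $\log$, so that no uncontrolled gradient term survives; one then has to check that Royden's polarization really yields the sharp constant $\tfrac{(n+1)\kappa}{2n}$ from the pointwise holomorphic sectional curvature bound. By contrast, the non-compactness of $M$ poses no real difficulty here, being absorbed entirely into Proposition \ref{max}.
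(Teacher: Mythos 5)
Your proposal is correct and follows essentially the same route as the paper: the evolution of $S=\Tr_{\omega(t)}\omega_0$ via the Schwarz-lemma (Chern--Lu) computation, Royden's lemma giving the constant $\tfrac{(n+1)\kappa}{2n}$, the passage to $\log S$ to cancel the gradient terms (your pointwise Cauchy--Schwarz in a diagonalizing frame is exactly the content of Yau's algebraic inequality cited in the paper), and the noncompact maximum principle of Proposition \ref{max}. No gaps.
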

\begin{proof}
We compute the evolution of the quantity $S = \Tr_{\omega(t)}\omega_0$, adopting the notation that $\omega_0 =\sqrt{-1}\hat{g}_{i\bj}\,dz^i\wedge d\bar{z}^j$ and $\omega(t) = \sqrt{-1}g_{i\bj}\,dz^i\wedge d\bar{z}^j$, and we put a hat over all quantities and objects associated to the metric $\omega_0$

\begin{align*}
	\left(\dd{t}-\lapl_{\omega(t)}\right)S &= \dd{t}g^{i\bj}\hat{g}_{i\bj} - g^{k\bl}\d_k\d_{\bl}(g^{i\bj}\hat{g}_{i\bj})\\
	& = -g^{i\bl}g^{k\bj}(\dd{t}g_{k\bl})\hat{g}_{i\bj} + g^{k\bl}g^{i\bj}\hat{R}_{k\bl i\bj} -g^{i\bn}g^{m\bj}R_{m\bn}\hat{g}_{i\bj} - g^{k\bl}g_{m\bn}\hat{g}_{i\bj}\hat{\nabla}_kg^{i\bn}\hat{\nabla}_{\bl}g^{m\bj}\\
	& = g^{i\bl}g^{k\bj}g_{k\bl}\hat{g}_{i\bj} + g^{k\bl}g^{i\bj}\hat{R}_{k\bl i\bj} - g^{k\bl}g_{m\bn}\hat{g}_{i\bj}\hat{\nabla}_kg^{i\bn}\hat{\nabla}_{\bl}g^{m\bj}\\
	& = S + g^{k\bl}g^{i\bj}\hat{R}_{k\bl i\bj} - g^{k\bl}g_{m\bn}\hat{g}_{i\bj}\hat{\nabla}_kg^{i\bn}\hat{\nabla}_{\bl}g^{m\bj}\\
\end{align*}
By Royden's lemma. \cite[pg. 552]{Royden} we have
\[g^{k\bl}g^{i\bj}\hat{R}_{k\bl i\bj} \leq  - \frac{n+1}{2n}\kappa S^2\]
so
\begin{equation}
	\left(\dd{t}-\lapl_{\omega(t)}\right)S \leq S - \frac{n+1}{2n}\kappa S^2 - g^{k\bl}g_{m\bn}\hat{g}_{i\bj}\hat{\nabla}_kg^{i\bn}\hat{\nabla}_{\bl}g^{m\bj}
\end{equation}
and it follows
\begin{align*}
\left(\frac{\d}{\d t}-\lapl_{\omega(t)}\right)\log S & = \frac{\left(\frac{\d}{\d t}-\lapl_{\omega(t)}\right)S }{S}+\frac{|\nabla S|_{g^2}}{S^2} \\
&\leq 1-\frac{n+1}{2n}\kappa S +\frac{|\nabla S|_{g^2}}{S^2}-\frac{g^{k\bl}g_{m\bn}\hat{g}_{i\bj}\hat{\nabla}_k g^{i\bl}\hat{\nabla}_{\bl}g^{m\bj}}{S}
\end{align*}
we need the following algebraic identity, which was one of the steps in Yau's proof of the Calabi conjecture. The proof can be found in \cite[p.349]{Yau}
\[\frac{|\nabla S|_{g^2}}{S^2}-\frac{g^{k\bl}g_{m\bn}\hat{g}_{i\bj}\hat{\nabla}_k g^{i\bl}\hat{\nabla}_{\bl}g^{m\bj}}{S}\leq 0\]
we obtain the following differential inequality for $S$. 
\begin{equation}
\left(\frac{\d}{\d t}-\lapl_{\omega(t)}\right)\log S \leq 1-\frac{n+1}{2n}\kappa S
\end{equation}
Since $S$ is bounded above by our assumption, we can apply the maximum principle to this inequality and get a uniform estimate for $S$ up to any time, 
\[S\leq \max(n, \frac{2n}{(n+1)\kappa})\]
and this implies 
\[\omega_0\leq \max(n, \frac{2n}{(n+1)\kappa}) \omega(t)\]
giving the desired lower bound for the metrics $\omega(t)$. 
\end{proof}

\section{Upper bound for the metric}
In this section we will provide an upper bound for the metric. To do this, we need to define a potential $\varphi$ and write the K\"ahler-Ricci flow equation as an equation of the potential $\varphi$. Define
\begin{equation}
\varphi(x, t) := e^{-t}\int_0^te^{s}\log \frac{\omega(s)^n}{\omega_0^n}ds
\end{equation}
differentiating this equation gives
\begin{equation}\label{eq:phi}
\dd{t}\varphi = \log \frac{\omega(t)^n}{\omega_0^n}-\varphi
\end{equation}
Then if we let $\tilde{\omega}(t) = e^{-t}(\omega_0 - i\d\db \log \omega_0^n)+i\d\db \log \omega_0^n +i\d\db \varphi$, by \eqref{eq:phi}, $\tilde{\omega}$ satisfies the ODE, 
\[\dd{t}\tilde{\omega}(t) = -\Ric(\omega(t))-\tilde{\omega}(t)\]
and since $\tilde{\omega}(0) = \omega_0$, we can conclude from uniquess of solutions to ODE that $\tilde{\omega}(t) = \omega(t)$. Hence the potential $\varphi$ satisfies the equation
\begin{equation}\label{eq:CMA}
\dd{t}\varphi = \log \frac{(e^{-t}(\omega_0-i\d\db \log \omega_0^n)+ i\d \db \log \omega_0^n+i\d\db \varphi)^n}{\omega_0^n}-\varphi
\end{equation}

\begin{Proposition}\label{decayofphidot}
There exist a constant $C>0$ depending only on $B$ and $\kappa$ in theorem \ref{main}, such that
\[|\varphi|\leq C \qquad\text{ and } \qquad|\dot{\varphi}|\leq Cte^{-t}\]
for any $t$. 
\end{Proposition}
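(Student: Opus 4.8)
The plan is to bound $|\varphi|$ and $|\dot\varphi|$ by a maximum principle argument applied to the parabolic equation \eqref{eq:CMA}, using the two-sided metric equivalence that is now available (the lower bound $\omega_0 \leq C\omega(t)$ from the previous Proposition, together with the upper bound $\omega(t) \leq C\omega_0$ from Shi's estimates, so that $\log(\omega(t)^n/\omega_0^n)$ is bounded uniformly in space and time). Since this quantity is $\dot\varphi + \varphi$ by \eqref{eq:phi}, we already know $\dot\varphi + \varphi$ is globally bounded; the point is to separate the two.

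First I would bound $\varphi$ itself. Since $e^{t}\varphi = \int_0^t e^{s}\log(\omega(s)^n/\omega_0^n)\,ds$ and the integrand is bounded by some $A$ depending on $B,\kappa$, we get directly $|\varphi(x,t)| \leq A(1-e^{-t}) \leq A$, without even invoking the maximum principle — this is the easy half.

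Next, for $\dot\varphi$, I would differentiate \eqref{eq:CMA} in $t$. Writing $\psi = \dot\varphi$, differentiating the right-hand side produces
\[
\dd{t}\psi = \Tr_{\omega(t)}\!\big(i\d\db\psi - e^{-t}(\omega_0 - i\d\db\log\omega_0^n)\big) - \psi = \lapl_{\omega(t)}\psi - \psi + E(t),
\]
where $E(t) = -e^{-t}\,\Tr_{\omega(t)}(\omega_0 - i\d\db\log\omega_0^n)$; since $\omega(t)$ is uniformly equivalent to $\omega_0$ and $\Rm(\omega_0)$ is bounded (so $i\d\db\log\omega_0^n = -\Ric(\omega_0)$ is bounded), we have $|E(t)| \leq C e^{-t}$. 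Thus $(\dd{t} - \lapl_{\omega(t)})\psi = -\psi + E$ with $|E| \leq Ce^{-t}$. Now consider $u = e^{t}\psi$: it satisfies $(\dd{t}-\lapl)u = e^t E$, and $|e^t E| \leq C$, and $u(x,0) = 0$ since $\varphi(\cdot,0)=0$ forces $\dot\varphi(\cdot,0) = \log(\omega_0^n/\omega_0^n) - 0 = 0$. To capture the linear-in-$t$ growth I would instead test the function $u - Ct$ (and $u + Ct$) against Proposition \ref{max}: since $(\dd{t}-\lapl)(u - Ct) = e^t E - C \leq 0$ and $u$ is bounded above on $M\times[0,T]$ for each fixed $T$ (because $\psi = \dot\varphi$ is bounded in terms of $\dot\varphi+\varphi$ and $\varphi$, hence $u = e^t\psi$ is at worst $Ce^t$, which is bounded on finite time intervals), alternative (2) of Proposition \ref{max} would force the existence of points $(x_k,t_k)$ with $(\dd{t}-\lapl)(u-Ct)(x_k,t_k) \geq -\eps_k$, contradicting the strict sign unless $u - Ct$ never exceeds its initial value $0$. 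Hence $u \leq Ct$, i.e. $\dot\varphi \leq Cte^{-t}$; applying the same argument to $-u$ gives the lower bound, and together $|\dot\varphi| \leq Cte^{-t}$.

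The main obstacle is the bookkeeping needed to legitimately apply the non-compact maximum principle of Proposition \ref{max}: one must verify that the test function ($u - Ct$, built from $e^t\dot\varphi$) is genuinely bounded above on $M \times [0,T]$ for each finite $T$ — which uses that $\dot\varphi$ is controlled by the bounded quantity $\dot\varphi+\varphi$ together with the now-established bound on $\varphi$ — and then extract the sharp $e^{-t}$ decay rate by choosing the comparison function $Ct$ (rather than a constant) so that the parabolic inequality has the right strict sign. A secondary technical point is confirming that differentiating \eqref{eq:CMA} in time is justified and that $\dot\varphi(\cdot,0) = 0$, which follows from evaluating \eqref{eq:phi} at $t=0$.
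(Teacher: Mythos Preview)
Your argument for $\dot\varphi$ is essentially the paper's: differentiate \eqref{eq:CMA}, obtain $(\dd{t}-\lapl_{\omega(t)})(e^t\dot\varphi)=-\Tr_{\omega(t)}(\omega_0+\Ric(\omega_0))$, bound the right side using $\Tr_{\omega(t)}\omega_0\le C$ from the previous Proposition together with $|\Ric(\omega_0)|_{\omega_0}\le B$, and compare with $\pm Ct$ via Proposition~\ref{max}. Two small points: (i) to actually rule out alternative (2) you need a \emph{strict} sign, so test $u-(C+\delta)t$ and let $\delta\to 0$; (ii) the boundedness hypothesis of Proposition~\ref{max} on $M\times[0,T]$ only requires $T$-dependent bounds, and those come directly from the standing assumption that $\omega(t)$ is Shi's solution (equivalent to $\omega_0$ on each compact time interval), so you do not need your $\varphi$-bound for this step.

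The genuine gap is in your direct bound on $\varphi$. You claim $|\log(\omega(s)^n/\omega_0^n)|\le A$ with $A=A(B,\kappa)$, invoking ``the upper bound $\omega(t)\le C\omega_0$ from Shi's estimates''. Shi's theorem gives no such uniform-in-time upper bound with constant depending only on $B$; the metric equivalence one gets from bounded curvature on $[0,T]$ has constants that may deteriorate as $T\to T_{\max}$. The only upper bound on $\omega(t)$ with constant depending solely on $B,\kappa$ is Corollary~\ref{equivalenceOfMetrics}, which is \emph{deduced from} the present Proposition --- so invoking it here is circular. The fix is simply to reverse the order, as the paper does: first establish $|\dot\varphi|\le Cte^{-t}$ (your argument for this needs only the lower bound $\omega_0\le C\omega(t)$, which is available), and then obtain $|\varphi|\le C$ by integrating, since $\varphi(\cdot,0)=0$ and $\int_0^\infty se^{-s}\,ds<\infty$.
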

\begin{proof}
We compute the evolution of $\dot{\varphi}$, 
\begin{align*}
\dd{t}\dot{\varphi} &= \dd{t}\left(\log \frac{\omega^n(t)}{\omega_0^n}-\varphi\right) = \tilde{g}^{i\bj}\dd{t}\tilde{g}_{i\bj}-\dot{\varphi}\\
& = \Tr_{\omega(t)}\left[\dd{t}(e^{-t}(\omega_0-i\d\db\log \omega_0^n)+i\d\db\log \omega_0^n+i\d\db\varphi)\right]-\dot{\varphi}\\
& = \lapl_{\omega(t)}\dot{\varphi}-e^{-t}\Tr_{\omega(t)}(\omega_0-i\d\db\log\omega_0^n)-\dot{\varphi}
\end{align*}
we can rewrite this as
\begin{equation}\label{eq:evolution}
\left(\dd{t}-\lapl_{\omega(t)}\right)(e^t\dot{\varphi}) = -\Tr_{\omega(t)}(\omega_0+ \Ric(\omega_0))
\end{equation}
by the previous section we know that $\Tr_{\omega(t)}\omega_0$ is bounded. 
Also by our assumptions, we know that the curvature tensor at the initial time is bounded so $|\Ric(\omega_0)|_{\omega_0}$ and $\Tr_{\omega(t)}\omega_0$ are both bounded, which gives a bound
\[|\Tr_{\omega(t)}\Ric(\omega_0)|\leq C\]
so the right hand side of \eqref{eq:evolution} is bounded 
\begin{equation}
\left|\left(\dd{t}-\lapl_{\omega(t)}\right)(e^t\dot{\varphi})\right| \leq C
\end{equation}
we can apply the maximum principle to this and get
\[|e^t\dot{\varphi}|\leq Ct\implies |\dot{\varphi}|\leq Cte^{-t}\]
then integrating this gives the bound on the potential $\varphi$. 
\end{proof}

\begin{Corollary}\label{equivalenceOfMetrics}
	There exist a constant $C>0$, depending only on the constants $B$ and $\kappa$ in theorem \ref{main}, such that
	\[C^{-1}\omega_0\leq \omega(t)\leq C\omega_0\]
	for any $t$, as long as the flow exists. 
\end{Corollary}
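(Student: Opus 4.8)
The plan is to combine the uniform lower bound on $\omega(t)$ already established in Section~2 with a uniform two-sided bound on the ratio of volume forms, and then to conclude by an elementary eigenvalue argument.

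First, the Proposition of Section~2 gives $\omega_0 \le \max\!\big(n, \tfrac{2n}{(n+1)\kappa}\big)\,\omega(t)$, which is precisely the asserted lower bound $\omega(t) \ge C^{-1}\omega_0$ with $C = \max\!\big(n, \tfrac{2n}{(n+1)\kappa}\big)$. So only the upper bound $\omega(t)\le C\omega_0$ remains to be proved. Equivalently, it suffices to bound $\Tr_{\omega_0}\omega(t)$, or the largest eigenvalue of $\omega(t)$ relative to $\omega_0$.

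Next, I would extract a bound on the volume forms directly from Proposition~\ref{decayofphidot}. Rewriting \eqref{eq:phi} as
\[
\log\frac{\omega(t)^n}{\omega_0^n} = \dot\varphi + \varphi,
\]
and using $|\varphi|\le C$ together with $|\dot\varphi|\le Cte^{-t}\le C$ from Proposition~\ref{decayofphidot}, we obtain a constant $C$ depending only on $B$ and $\kappa$ such that
\[
C^{-1}\,\omega_0^n \le \omega(t)^n \le C\,\omega_0^n .
\]

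Finally, at an arbitrary point choose coordinates in which $\omega_0$ is the identity and $\omega(t)$ is diagonal with eigenvalues $0<\lambda_1\le\cdots\le\lambda_n$. The lower bound forces $\lambda_i \ge c > 0$ for every $i$ (with $c^{-1}=\max(n,\tfrac{2n}{(n+1)\kappa})$), while the volume bound forces $\prod_{i=1}^n\lambda_i\le C$; hence
\[
\lambda_n \le \frac{\prod_{i=1}^n\lambda_i}{\prod_{i=1}^{n-1}\lambda_i} \le \frac{C}{c^{\,n-1}},
\]
which is exactly $\omega(t)\le C'\omega_0$ for a constant $C'$ depending only on $B$, $\kappa$ (and $n$). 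I do not expect any genuine obstacle: all the inputs are already in place, and the only point requiring a moment's care is verifying that every constant depends only on $B$ and $\kappa$, which is immediate since this is true of the constants appearing in the Proposition of Section~2 and in Proposition~\ref{decayofphidot}.
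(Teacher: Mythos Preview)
Your proposal is correct and follows essentially the same approach as the paper: combine the lower bound $\omega_0\le C\omega(t)$ from Section~2 with the volume-form bound $\omega(t)^n\le C\omega_0^n$ obtained from $\log(\omega(t)^n/\omega_0^n)=\varphi+\dot\varphi$ and Proposition~\ref{decayofphidot}, then conclude by the elementary eigenvalue argument $\lambda_j=\prod_i\lambda_i/\prod_{i\ne j}\lambda_i$. The paper's proof is identical in substance.
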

\begin{proof}
From the equation \eqref{eq:phi}, we know that
\[\omega(t)^n = e^{\varphi+\dot{\varphi}}\omega_0^n\]
and by our proposition, we know that $\varphi+\dot{\varphi}$ is bounded for all time, hence we have
\[\omega(t)^n\leq C\omega_0^n\]
and from the previous section, we also have
\[\omega_0\leq C\omega(t)\]
If we choose an orthonormal frame on the tangent space such that $(g_0)_{i\bj} = \delta_{ij}$ and $g_{i\bj} = \lambda_i \delta_{ij}$, then the inequalities can be written as
\[\prod_{i=1}^n \lambda_i\leq C\]
and 
\[ \frac{1}{\lambda_i}\leq C\]
so combining them we get
\[\lambda_j = \frac{\prod_{i=1}^n\lambda_i}{\prod_{i\neq j}\lambda_i} \leq C^n\]
this gives the upper bound for $\lambda_i$, which gives an upper bound for $\omega(t)$. 
\end{proof}
From this, the long time existence and convergence can be obtained from standard estimates of the K\"ahler-Ricci flow. For example, we can apply local estimates in \cite{ShW} applied to the holomorphic coordinate systems constructed in \cite{WY3} to obtain the result. For the sake of completeness, we will provide an argument here, the argument is elementary in that it uses only the maximum principle.  

\section{Long Time Existence}
In this section, we show the flow exists for all time and the curvature and its covariant derivatives are all bounded. The method is the same as in the compact case, see for example \cite{SW}. 

\begin{Proposition}\label{LTE}
The flow $(M, g(t))$ exists for all time, and there exist constants $C_l$ depending only on the constants $B$ and $\kappa$ from theorem \ref{main} such that
\[\sup_M|\nabla_{g(t)}^l\Rm(g(t))|_{g(t)}\leq C_l\]
for any $t\in [1, \infty)$. 
\end{Proposition}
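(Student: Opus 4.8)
The plan is to bootstrap from Corollary~\ref{equivalenceOfMetrics} in the standard way for K\"ahler-Ricci flow. The starting point is that uniform metric equivalence $C^{-1}\omega_0 \leq \omega(t) \leq C\omega_0$, together with Shi's short-time derivative estimates on a fixed time interval $[0,1]$ say, means the flow cannot develop a singularity: the metrics stay uniformly equivalent to $\omega_0$, and one only needs to propagate curvature bounds. So the core of the argument is a uniform $C^2$ estimate on the potential $\varphi$, i.e.\ a bound $|\nabla^2_{\omega_0}\varphi|_{\omega_0} \leq C$ (equivalently a bound on $\Tr_{\omega_0}\omega(t)$, which we essentially already control), followed by a uniform bound on $\Rm(\omega(t))$, and then higher-order bounds by Shi-type local estimates.

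The key steps, in order: First, establish a uniform bound on the full curvature tensor $|\Rm(\omega(t))|_{\omega(t)} \leq C$ for $t \in [1,\infty)$. The method is the maximum-principle argument of Shi (as in \cite{SW} for the compact case): consider the quantity $\Phi = |\Rm|^2$ (or a suitable combination like $(A + |\Rm|^2)^{1/2}$ multiplied by a barrier built from $\Tr_{\omega(t)}\omega_0$ or $\dot\varphi$ to absorb bad terms), compute its evolution $(\d_t - \lapl)\Phi$, and use that the reaction terms are controlled by $|\Rm|^3$ while the metric equivalence and the already-established bounds on $\varphi, \dot\varphi, S$ give enough negativity or dampening. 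Since all metrics are uniformly equivalent and complete with bounded curvature (from Shi), Proposition~\ref{max} applies and yields the bound. Second, feed this uniform $\Rm$ bound together with metric equivalence into the local higher-order estimates of Shi \cite{Shi} (or \cite{ShW, ShW2}-type interior estimates) to get $\sup_M |\nabla^l_{g(t)}\Rm(g(t))|_{g(t)} \leq C_l$ for $t \geq 1$, the shift from $t=0$ to $t=1$ giving the parabolic smoothing. Third, long-time existence: the uniform two-sided metric bound plus the curvature bound means that along any finite time interval the flow stays in a compact region of the space of metrics with bounded geometry, so by Shi's existence theorem the flow extends past any finite $T$; hence it exists for all $t \in [0,\infty)$.

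All constants depend only on $B$ and $\kappa$ because every input does: the metric-equivalence constant from Corollary~\ref{equivalenceOfMetrics} depends only on $B, \kappa$; the initial curvature bound is $B$; and the maximum-principle/Shi estimates convert these into the $C_l$.

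The main obstacle I expect is the curvature bound in Step~1. On a noncompact manifold one cannot simply evaluate at a maximum, so one must set up the barrier/auxiliary function carefully so that the corrected quantity is bounded above (to invoke Proposition~\ref{max}) and so that at the almost-maximizing sequence $(x_k,t_k)$ the gradient and Laplacian error terms $\epsilon_k$ are genuinely negligible against the strictly negative reaction term. Choosing the right combination (the classical choice involves $|\Rm|^2$ divided by something like $(C_0 - S)$ or multiplied by $e^{-At}$ together with a term linear in $\dot\varphi$ or $\Tr_{\omega(t)}\omega_0$ to kill the $|\Rm|^2$-order terms) and verifying the sign of every term is the delicate computation; once that is in hand, the rest is routine parabolic bootstrapping.
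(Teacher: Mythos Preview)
Your overall outline---metric equivalence, then a uniform curvature bound by maximum principle, then Shi's estimates for higher derivatives, then long-time existence---matches the paper. The gap is in Step~1: you have not identified an auxiliary quantity whose heat-operator evolution produces a \emph{good} $-|\Rm|^2$ term to kill the cubic reaction $C|\Rm|^2$ in $(\partial_t-\Delta)|\Rm|$. Neither $\Tr_{\omega(t)}\omega_0$ nor $\dot\varphi$ does this; their evolutions involve at best first-derivative quantities of the metric or bounded source terms, not $|\Rm(g(t))|^2$. So the combinations you list would not close.

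What the paper actually uses is the intermediate ``Calabi $C^3$'' quantity $S=|\hat\nabla g|_g^2=|T|_g^2$, where $T=\Gamma-\hat\Gamma$ and $\hat g$ is a reference metric. The key identity is $\overline\nabla_{\bar k}T_{ij}^l = \hat R_{\bar k ij}^{\;l} - R_{\bar k ij}^{\;l}$, so the evolution of $S$ contains $-|\overline\nabla T|^2 = -|\Rm(g)-\Rm(\hat g)|^2$. Choosing $\hat g = g(\epsilon/2)$ for a fixed small $\epsilon$ (so that by Shi's short-time estimates $\hat g$ has all curvature derivatives bounded in terms of $B$) makes $\Rm(\hat g)$ bounded, and hence $(\partial_t-\Delta)S \le D - |\Rm(g)|^2$ once $S$ itself is bounded. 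The argument is then a two-stage bootstrap: first combine $S$ with $\Tr_{\hat g}g$ (whose evolution has a good $-c\,S$ term) to bound $S$; second combine $|\Rm|$ with a large multiple of $S$ to bound $|\Rm|$. Both steps use Proposition~\ref{max} exactly as you anticipated for the noncompact setting. Once you insert this missing $|T|^2$ step, the rest of your plan goes through.
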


\begin{proof}
Suppose the flow exists on a maximal time interval $[0, T_{\max})$, it suffices for us to bound the curvature of $\omega(t)$ are bounded up to the final time $T_{\max}$. Then by Shi's theorem, we can then extend the flow past time $T_{\max}$ and we get our desired bounds on the covariant derivatives of curvature. 

First we need an estimate on the derivative of the metric, such estimates were used by Yau in his solution of the Calabi conjecture \cite{Yau}. 

Consider the following quantity
\[S = |\hat{\nabla}g|_{g}^2 = g^{i\bl}g^{n\bj}g^{k\bm}\hat{\nabla}_ig_{\bj k}\hat{\nabla}_{\bl}g_{\bm n} = |T|_g^2\]
where $\hat{\nabla}$ is the connection with respect to some fixed reference metric $\hat{g}$, and $T_{ik}^l = \Gamma_{ik}^l-\hat{\Gamma}_{ik}^l$ is the difference of the connections. $S$ satisfy the following identity
\begin{align}
(\dd{t}-\lapl)S &= -|\nabla T|^2-|\overline{\nabla}T|^2+S -2\Re(\langle g^{a\bar{b}}\nabla_a\hat{R}_{i\bar{b}k}^l,  T_{ik}^l\rangle)\nonumber \\ 
& =  -|\nabla T|^2-|\d_{\bk}T_{ij}^l|^2+S -\hat{\nabla}\hat{R}\star T-\hat{R}\star T\star T\nonumber\\
\label{S}
& = -|\nabla T|^2-|\Rm(g(t))-\Rm(\hat{g})|^2+S -\hat{\nabla}\hat{R}\star T-\hat{R}\star T\star T
\end{align}
and 
\begin{align}
\label{tr}
(\dd{t}-\lapl)\Tr_{\hat{g}}g &= -\Tr_{\hat{g}}g -g^{k\bl}\hat{R}_{k\bl}^{i\bj}g_{i\bj}-g^{n\bm}g^{k\bl}\hat{g}^{i\bj}\hat{\nabla}_kg_{i\bm}\hat{\nabla}_{\bl}g_{n\bj}
\end{align}
see \cite{PSS} or \cite{SW} for the calculations. 
By Shi's theorem, we know the flow must exist for at least some definite amount of time $t\in [0, \epsilon)$ where $\epsilon$ depends only on the constant $B$ in theorem \ref{main}. So we can now fix $\hat{g} = g(\epsilon/2)$ for instance, then by Shi's theorem, the curvature of $\hat{g}$ and its covariant derivatives are all bounded by constants depending only on the initial curvature bound $B$. 

Then the identity \eqref{S} gives 
\[(\dd{t}-\lapl)S \leq S-\hat{\nabla}\hat{R}\star T-\hat{R}\star T\star T\leq  C(S+1)\]
and from \eqref{tr}, 
\[(\dd{t}-\lapl)\Tr_{\hat{g}}g \leq M-M^{-1}S\]
so we have
\[(\dd{t}-\lapl)(S+A\Tr_{\hat{g}}g)\leq (C-AM^{-1})S + MA+C\]
Letting $A = 2CM$ and applying the maximum principle, we obtain that $S\leq C$ for some uniform constant $C$. 

Now we bound the curvature, the norm of the curvature tensor satisfies an inequality, \cite[Lemma 2.12]{SW}
\begin{align}
(\dd{t}-\lapl)|\Rm| \leq C(|\Rm|^2+1)
\end{align}
for $C$ depending only on dimension.  And from \eqref{S}, 
\[(\dd{t}-\lapl)S\leq D-|\Rm(g(t))|^2\] 
so we have
\begin{align}
(\dd{t}-\lapl)(|\Rm|+NS)&\leq C(|\Rm|^2+1) + N(D-|\Rm(g)|^2)\\
& = (C-N)|\Rm(g)|^2+C+ND
\end{align}
picking $N=2C$. and applying the maximum principle gives the desired bound on cuvature. Then the bounds on covariant derivatives of curvature follow by Shi's estimates. 
\end{proof}
\section{Convergence}
\begin{Proposition}
There exist constants $C_l'>0$ depending only on $B$ and $\kappa$ from theorem \ref{main} such that 
	\[\left|g(t)\right|_{C^l(M, g(1))}\leq C_l'\]
for all $t\in [1, \infty)$. 
\end{Proposition}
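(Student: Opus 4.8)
The plan is to reduce everything to the fixed background metric $\hat g := g(1)$ and argue by induction on $l$, running at each stage a Calabi-type computation together with Proposition \ref{max}. The base case $l=0$ is immediate: Corollary \ref{equivalenceOfMetrics} gives $C^{-1}\omega_0\leq\omega(t)\leq C\omega_0$, hence $C^{-1}\hat g\leq g(t)\leq C\hat g$ and $|g(t)|_{\hat g}\leq C$ for all $t$. Moreover, by Proposition \ref{LTE} the metric $\hat g=g(1)$ has $|\nabla^m_{\hat g}\Rm(\hat g)|_{\hat g}\leq C_m$, and more generally $|\nabla^m_{g(t)}\Rm(g(t))|_{g(t)}\leq C_m$ uniformly for $t\in[1,\infty)$, all constants depending only on $B$ and $\kappa$; these are the estimates that will feed the induction.

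For $l=1$, I would repeat the computation \eqref{S}--\eqref{tr} from the proof of Proposition \ref{LTE}, now with the choice $\hat g=g(1)$. This is legitimate because $\hat R$ and $\hat\nabla\hat R$ are bounded by Proposition \ref{LTE} rather than merely by Shi's estimates. Writing $S_1:=|\hat\nabla g|_g^2=|T|_g^2$ with $T=\Gamma(g(t))-\Gamma(\hat g)$, the uniform equivalence $g(t)\sim\hat g$ turns \eqref{S} into $(\dd{t}-\lapl)S_1\leq C(S_1+1)$ and \eqref{tr} into $(\dd{t}-\lapl)\Tr_{\hat g}g\leq M-M^{-1}S_1$ for uniform constants $C,M$. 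For a suitable choice of $A$, the quantity $S_1+A\Tr_{\hat g}g$ then satisfies $(\dd{t}-\lapl)(S_1+A\Tr_{\hat g}g)\leq -S_1+C'$, and Proposition \ref{max} gives $S_1\leq C$, hence $|\hat\nabla g(t)|_{\hat g}\leq C$ uniformly in $t\geq 1$. As in the proof of Proposition \ref{LTE}, the a priori upper bound on $S_1$ needed to invoke Proposition \ref{max} is supplied by Shi's estimates, restarting the flow on the short time intervals $[t_0,t_0+\eps]$ with $t_0\geq 1$.

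For the inductive step, assume $|\hat\nabla^k g(t)|_{\hat g}\leq C_k'$ uniformly for $t\geq 1$ and all $k\leq l$, and set $S_l:=|\hat\nabla^l g|_g^2$, $S_{l+1}:=|\hat\nabla^{l+1}g|_g^2$. Differentiating the normalized flow $\dd{t}g=-\Ric(g)-g$, commuting covariant derivatives, and performing the standard Bochner-type computation (as in \cite{PSS}, \cite{SW}) I expect evolution inequalities of the schematic form
\[(\dd{t}-\lapl)S_l\leq -\tfrac12 S_{l+1}+C,\qquad (\dd{t}-\lapl)S_{l+1}\leq C\,S_{l+1}+C,\]
with $C$ depending only on $B$ and $\kappa$. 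The key point is that every term of differential order exceeding $l+1$ must be routed through the curvature --- in particular $\Ric(g)_{i\bj}=g^{k\bl}R_{k\bl i\bj}$, so that $\hat\nabla^{l+1}\Ric(g)$ is controlled by $|\nabla^{\leq l+1}_g\Rm(g)|$ and products of $\hat\nabla^{\leq l+1}g$ --- and is then absorbed using Proposition \ref{LTE}, the inductive hypothesis, and the uniform equivalence; the $-\tfrac12 S_{l+1}$ in the first inequality is the surviving part of the Calabi term $-|\nabla^g\hat\nabla^l g|^2$ after discarding lower order pieces via Young's inequality. Then $(\dd{t}-\lapl)\big(S_{l+1}+(2C+1)S_l\big)\leq -\tfrac12 S_{l+1}+C'$, and Proposition \ref{max}, with the a priori bound on $S_{l+1}$ justified as before, yields $S_{l+1}\leq C$, i.e. $|\hat\nabla^{l+1}g(t)|_{\hat g}\leq C_{l+1}'$ uniformly in $t\geq 1$. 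This completes the induction, since $|g(t)|_{C^l(M,g(1))}$ is controlled by $\sum_{k=0}^l\sup_M|\hat\nabla^k g(t)|_{\hat g}$.

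The main obstacle is the derivation and bookkeeping of the two displayed evolution inequalities: checking that, after commuting derivatives and substituting the curvature expression for $\Ric$, the only genuinely $(l+1)$-st order term on the right-hand side of the equation for $S_{l+1}$ is the harmless $C\,S_{l+1}$, while the equation for $S_l$ keeps a definite negative multiple of $S_{l+1}$. This is routine but notationally heavy; everything else is an application of Proposition \ref{max} against the already-established estimates. An alternative to this parabolic induction is a purely elliptic bootstrap in the bounded holomorphic charts for $g(1)$, feeding the curvature bounds of Proposition \ref{LTE} into Schauder estimates for the identity $\d_i\d_{\bj}g_{k\bl}=-R_{i\bj k\bl}+g^{p\bq}\d_i g_{k\bq}\d_{\bj}g_{p\bl}$ and its derivatives, but the maximum-principle argument is more in keeping with the rest of the paper.
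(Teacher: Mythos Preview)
Your proposal is correct and follows the same overall strategy as the paper: a Calabi--type induction on the order of differentiation, closed at each step by the maximum principle of Proposition~\ref{max}, with Proposition~\ref{LTE} supplying uniform curvature bounds and Corollary~\ref{equivalenceOfMetrics} supplying the metric equivalence.

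The genuine difference is in the choice of quantity. You work directly with $S_l=|\hat\nabla^l g|_g^2$, taking covariant derivatives with respect to the \emph{fixed} background $\hat g=g(1)$. The paper instead follows \cite{PSS} and sets $M_l=\nabla^{l-1}T$ with $T=\Gamma-\hat\Gamma$, taking covariant derivatives with respect to the \emph{evolving} metric and only in holomorphic directions, and puts $S_l=|M_l|_g^2$. This yields a single clean inequality $(\dd{t}-\lapl)S_l\le -S_{l+1}+D_l(S_l+1)$ for every $l$, because $(\dd{t}-\overline{\lapl})T$ reduces to $-g^{a\bar b}\nabla_a\hat R_{i\bar b k}{}^l$ and all commutators $[\nabla,\nabla]$ produce only the already--bounded $\Rm(g)$; mixed derivatives are recovered at the end from $\nabla_{\bar i}T_{jk}{}^l=\hat R_{\bar i jk}{}^l-R_{\bar i jk}{}^l$ and the identity $T_{ij}^k g_{\bar m k}=\hat\nabla_i g_{\bar m j}$. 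Your version is more direct---no conversion from $T$ back to $g$ is needed---but the cancellation of the top $(l{+}2)$--order terms in $(\dd{t}-\lapl)\hat\nabla^{\,l}g$ between $-\hat\nabla^{\,l}\Ric(g)$ and $-\lapl\hat\nabla^{\,l}g$ has to be checked by hand via the K\"ahler symmetry $\hat\nabla_i g_{k\bar l}=\hat\nabla_k g_{i\bar l}$ and commutation with $\hat\Rm$; this is exactly the ``routine but notationally heavy'' step you flag. Either route gives the result with constants depending only on $B$ and $\kappa$.
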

\begin{proof}
	The proof follows a similar method as the one used to bound the derivative of the metric under the K\"ahler-Ricci flow in \cite{PSS}. We will denote $\hat{g} = g(1)$ and put a hat on all quantities associated to $\hat{g}$. Consider the quantities
	\[T_{ij}^k = \Gamma_{ij}^k - \hat{\Gamma}_{ij}^k\]
and define
	\[(M_l)_{i_1\ldots i_{l-1} ij}^k = \nabla^{l-1}_{i_1 \ldots i_{l-1}}T_{ij}^k\]
	and
	\[S_l = |M_l|^2_{g} \]
we would like to obtain bounds for all the $S_l$, this is the content of the following lemma. 
\begin{Lemma}
	For every $l\geq 0$, there exist constants $D_l$ so that the quantities $S_l$ satisfies the following inequality
	\begin{equation}
		(\dd{t}-\lapl)S_l\leq -S_{l+1} + D_l(S_l+1)
	\end{equation}
	the constant $D_l>0$ depend only on $C_l$ from proposition \ref{LTE} and $C$ in corollary \ref{equivalenceOfMetrics}. Then there exist constant $B_l\geq 0$ depending only on $D_l$ such that 
	\[S_l\leq B_l. \]
for all $t\in [1, \infty)$
\end{Lemma}
\begin{proof}
We prove this by induction on $l$, for $l=0$, this is true by the calculation in \cite{PSS}. Suppose the lemma is true for any $l < k$, then we compute

\begin{align}
	(\dd{t}-\lapl)S_k & = \sum_{l=1}^{k-1} g^{i_1 \bar{j}_1}\ldots R^{i_l \bar{j}_l}\cdots g^{i_{k-1} \bar{j}_{k-1}}g^{p\bq}g^{r\bar{s}}g_{m\bn} (M_k)_{i_1\ldots i_{k-1}pr}^m\overline{(M_k)_{j_1\ldots j_{k-1}qs}^n}\nonumber \\
	& \qquad + g^{i_1\bar{j_1}}\ldots g^{i_{k-1}\bar{j}_{k-1}}R^{p\bq}g^{r\bar{s}}g_{m\bn}(M_k)_{i_1\ldots i_{k-1}pr}^m\overline{(M_k)_{j_1\ldots j_{k-1}qs}^n}\nonumber\\
	& \qquad + g^{i_1\bar{j_1}}\ldots g^{i_{k-1}\bar{j}_{k-1}}g^{p\bq}R^{r\bar{s}}g_{m\bn}(M_k)_{i_1\ldots i_{k-1}pr}^m\overline{(M_k)_{j_1\ldots j_{k-1}qs}^n}\nonumber\\
	& \qquad - g^{i_1\bar{j_1}}\ldots g^{i_{k-1}\bar{j}_{k-1}}g^{p\bq}g^{r\bar{s}}R_{m\bn}(M_k)_{i_1\ldots i_{k-1}pr}^m\overline{(M_k)_{j_1\ldots j_{k-1}qs}^n}\nonumber\\
	& \qquad +kS_k+ \langle\dd{t}M_k, M_k\rangle + \langle M_k, \dd{t}M_k\rangle - |\nabla M_k|^2 - |\overline{\nabla} M_k|^2 \nonumber \\
	& \qquad  - \langle \overline{\lapl}M_k, M_k\rangle - \langle M_k, \lapl M_k\rangle\\
\end{align}
where $\lapl = g^{a\bar{b}}\nabla_{\bar{b}}\nabla_{a}$ and $\overline{\lapl} = g^{a\bar{b}}\nabla_a\nabla_{\bar{b}}$ is its conjugate. Then we have 
\begin{align}
	\lapl M_k &= \overline{\lapl}M_k + g^{a\bar{b}}[\nabla_{\bar{b}}, \nabla_a]M_k\\
	& = \overline{\lapl}M_k - \sum_{l=1}^{k-1} R_{i_l}^c(M_k)_{i_1\ldots i_{l-1} c i_{l+1}\ldots i_{k-1} pr}^m \nonumber \\
	&\qquad  +R_{p}^c(M_k)_{i_1\ldots i_{k-1} cr}^m +R_{p}^c(M_k)_{i_1\ldots i_{k-1} pc}^m - R_{c}^m(M_k)_{i_1\ldots i_{k-1} pr}^c
\end{align}
and in particular we have 
\begin{align}
	\langle M_k, \lapl M_k\rangle &= \langle M_k, \overline{\lapl} M_k\rangle + \sum_{l=1}^{k-1} g^{i_1 \bar{j}_1}\ldots R^{i_l \bar{j}_l}\cdots g^{i_{k-1} \bar{j}_{k-1}}g^{p\bq}g^{r\bar{s}}g_{m\bn} (M_k)_{i_1\ldots i_{k-1}pr}^m\overline{(M_k)_{j_1\ldots j_{k-1}qs}^n}\nonumber \\
	& \qquad + g^{i_1\bar{j_1}}\ldots g^{i_{k-1}\bar{j}_{k-1}}R^{p\bq}g^{r\bar{s}}g_{m\bn}(M_k)_{i_1\ldots i_{k-1}pr}^m\overline{(M_k)_{j_1\ldots j_{k-1}qs}^n}\nonumber\\
	& \qquad + g^{i_1\bar{j_1}}\ldots g^{i_{k-1}\bar{j}_{k-1}}g^{p\bq}R^{r\bar{s}}g_{m\bn}(M_k)_{i_1\ldots i_{k-1}pr}^m\overline{(M_k)_{j_1\ldots j_{k-1}qs}^n}\nonumber\\
	& \qquad - g^{i_1\bar{j_1}}\ldots g^{i_{k-1}\bar{j}_{k-1}}g^{p\bq}g^{r\bar{s}}R_{m\bn}(M_k)_{i_1\ldots i_{k-1}pr}^m\overline{(M_k)_{j_1\ldots j_{k-1}qs}^n}
\end{align}
so combining the computations above, we have
\begin{align}
	(\dd{t}-\lapl)S_k  &= -|\nabla M_k|^2-|\overline{\nabla}M_k|^2-2\Re \langle(\dd{r}-\overline{\lapl})M_k, M_k\rangle+kS_k\\
	&\leq -S_{k+1} - 2 \Re\langle(\dd{r}-\overline{\lapl})M_k, M_k\rangle+kS_k\label{S_k}
\end{align}
next we compute, 
\begin{align}
	(\dd{t}-\overline{\lapl})M_k &= (\dd{t}-\overline{\lapl})\nabla^{k-1}T \\
	&= \nabla^{k-1}(\dd{t}-\overline{\lapl})T + R\star \nabla^{k-1}T+ \sum_{j=1}^{k-1}\nabla^j R \star \nabla^{k-1-j}T\\
	&= -\nabla^{k-1}(g^{a\bar{b}}\nabla_{a}\hat{R}_{i\bar{b}p}^l) + R\star \nabla^{k-1}T+ \sum_{j=1}^{k-1}\nabla^j R \star \nabla^{k-1-j}T\\
	&= -\nabla^{k-1}(\hat{\nabla}^{\bar{b}}\hat{R}_{i\bar{j}p}^)+\nabla^{k-1}(T\star\hat{R})+ R\star \nabla^{k-1}T+ \sum_{j=1}^{k-1}\nabla^j R \star \nabla^{k-1-j}T\\
	& =\hat{R}\star\nabla^{k-1}T+ R\star \nabla^{k-1}T + Q(T, \ldots, \nabla^{k-2}T, R, \ldots, \nabla^{k-1}R, \hat{R}, \ldots, \hat{\nabla}^k \hat{R})
\end{align}
where $Q$ is some expression involving only the quantities in the bracket, in particular this term is bounded by the induction hypothesis. So by the induction hypothesis, we have 
\begin{equation}
	|(\dd{t}-\lapl)M_k|\leq C(|\nabla^{k-1}T|+1) = C(|M_k|+1)
\end{equation}
then from \eqref{S_k}, we get
\begin{align}
(\dd{t}-\lapl)S_k &\leq -S_{k+1} + C_k(S_k + \sqrt{S_k})\\
&\leq -S_{k+1}+C_k(S_k+1)
\end{align}
this proves the first part of the lemma. Next, from this we have
\[(\dd{t}-\lapl)(S_k + AS_{k-1})\leq (C_k-A)S_k + C_k+AC_{k-1}(S_{k-1}+1)\]
and by choosing $A = C_k+1$ and using the induction hypothesis that $S_{k-1}$ is bounded, we get 
\[(\dd{t}-\lapl)(S_k + AS_{k-1})\leq -S_k + C_k+AC_{k-1}(B_{k-1}+1)\]
applying the maximum principle, we get the bounds $S_l\leq B_l$, where we can set $B_l$ to be $C_k+AC_{k-1}(B_{k-1}+1)$. 
\end{proof}
So we have uniform bounds on $S_k$, which means bounds on all derivatives of $T$ in the holomorphic directions. But we claim that actually all derivatives of $T$ are bounded. To see this, note that the identity $\nabla_{\bar{i}}T_{jk}^l = \hat{R}_{\bi j k}^l-R_{\bi j k}^l$ allows us to get rid of a derivative in the antiholomorphic direction and replace it by curvature terms, which we know are bounded and have bounded derivatives. So to bound the mixed derivatives, we can first commute the derivatives to move the antiholomorphic ones in front and then use this identity and the fact that the curvature and all its derivatives are bounded. From this we can conclude that $|T|_{C^k(M, g(t))}\leq C_k$ it follows that $|T|_{C^k(M, \hat{g})}\leq C_k$, so we get uniform bounds on the $C^k$ norms of $T$ with respect to the fixed background metric. Then from the following identity
\[T_{ij}^kg_{\bm k} = \hat{\nabla}_{i}g_{\bm j}\]
we can get uniform bounds on the $C^k$ norm of $g$. 
\end{proof}
Now we can prove the main theorem, 
\begin{proof}[Proof of theorem \ref{main}]
By \eqref{decayofphidot}, we know that $\varphi(t)$ convergences in $C^0$ to some limit $\varphi_{\infty}$, which is continuous, and $\dot{\varphi}$ converges to 0 uniformly. Now from our definition of $\varphi$, we have
\[\varphi(x, t) := e^{-t}\int_0^te^{s}\log \frac{\omega(s)^n}{\omega_0^n}ds\]
and since $\omega(s)$ is bounded in $C^k$ norm by our previous lemma, so the right hand side is bounded in $C^k$ norm and we get uniform bounds of $|\varphi |_{C^k(M, g_0)}$ for all $k$. Hence the convergence of $\varphi$ to $\varphi_{\infty}$ is actually in $C^{\infty}$. So we can take a limit of the following equation
\[\dot{\varphi} = \log \frac{[e^{-t}(\omega_0-i\d\db \log \omega_0^n)+i\d\db \log \omega_0^n + i\d\db\varphi]^n}{\omega_0^n}-\varphi\]
and we get
\[0 = \log \frac{(i\d\db\log \omega_0^n + i\d\db\varphi_{\infty})^n}{\omega_0^n}-\varphi_{\infty}\]
hence $\omega_{\infty} = i\d\db \log \omega_0^n + i\d\db \varphi_{\infty}$ is K\"ahler-Einstein and the bounds on the curvature and its covariant derivatives follows from the $C^{\infty}$ convergence of the metrics. 
\end{proof}
\paragraph{Acknowledgements:}I would like to thank my advisor Duong Phong for suggesting the question and for his advice and guidence. 

\Addresses

\begin{thebibliography}{9}
\bibitem{Cao} 
HD. Cao, 
\textit{Deformation of K\"ahler metrics to K\"ahler-Einstein metrics on compact K\"ahler manifolds}. 
Invent. Math. 81 (1985), no. 2, 359–372

\bibitem{Chau} 
A. Chau, 
\textit{Convergence of the K\"ahler-Ricci flow on noncompact K\"ahler manifolds}. 
J. Differential Geom. 66 (2004), no. 2, 211–232.

\bibitem{CZ} 
BL. Chen, XP. Zhu, 
\textit{Uniqueness of the Ricci flow on complete noncompact manifolds}. 
J. Differential Geom. Vol 74, Number 1 (2006), 119-154.

\bibitem{DP} 
JP. Demailly, M. Paun,  
\textit{Numerical Characterization of the K\"ahler cone of a compact K\"ahler manifold}. 
Annals of Mathematics, 159 (2004), 1247–1274

\bibitem{Nomura} 
R. Nomura, 
\textit{K\"ahler Manifolds with Negative Holomorphic Sectional Curvature, K\"ahler-Ricci Flow Approach}. 
International Mathematics Research Notices, Vol. 2017, No. 00, pp. 1–6.

\bibitem{PSS} 
DH. Phong, N. Sesum, J. Sturm, 
\textit{Multiplier Ideal Sheaves and the K\"ahler-Ricci flow}. 
Communications in Analysis and Geometry. Vol 15, Number 3 (2007), 613-632.

\bibitem{PT} 
DH. Phong, DT. To, 
\textit{Fully non-linear parabolic equations on compact hermitian manifolds}. 
arXiv:1711.10697

\bibitem{Royden} 
H.L Royden, 
\textit{The Ahlfors-Schwarz lemma in several complex variables}. 
Commentarii Mathematici Helvetici. December 1980, Volume 55, Issue 1, pp 547–558

\bibitem{SY} 
R. Schoen, ST. Yau, 
\textit{Lectures on differential geometry}. 
International Press. (1994)

\bibitem{ShW} 
M. Sherman, B. Weinkove, 
\textit{Interior derivative estimates for the K\"ahler-Ricci flow}. 
arXiv:1107.1853. 

\bibitem{Shi} 
WX. Shi, 
\textit{Ricci deformation of the metric on complete noncompact Riemannian manifolds}. 
J. Differential Geom. Volume 30, Number 2 (1989), 303-394.

\bibitem{Shi2} 
WX. Shi, 
\textit{Ricci flow and the uniformization on complete noncompact K\"ahler manifolds}. 
J. Differential Geom. Volume 45, Number 1 (1997), 94-220.

\bibitem{SW} 
J. Song, B. Weinkove, 
\textit{Lecture notes on K\"ahler-Ricci flow}. 
arXiv:1212.3653. 

\bibitem{WY1} 
D. Wu, ST. Yau, 
\textit{Negative holomorphic curvature and positive canonical bundle}. 
Invent. math. (2016) 204: 595-604. 
 
\bibitem{WY2} 
D. Wu, ST. Yau, 
\textit{A remark on our paper “Negative holomorphic curvature and positive canonical bundle”}. 
Communications in Analysis and Geometry. (2016) 24: 901-912. 
 
\bibitem{WY3} 
D. Wu, ST. Yau, 
\textit{Invariant metrics on negatively pinched complete K\"ahler manifolds}. 
arXiv:1609.01377.

\bibitem{YauS} 
ST. Yau, 
\textit{A General Schwarz Lemma for Kahler Manifolds}. 
American Journal of Mathematics. Vol. 100, No. 1 (1978), pp. 197-203

\bibitem{Yau} 
ST. Yau, 
\textit{On The Ricci Curvature of a Compact K\"ahler. Manifold and the Complex Monge-Ampere. Equation}. 
Communications in Pure and Applied Mathematics. Volume 31, Issue 3 (1978) 24: 339-411. 

\end{thebibliography}
\end{document}